\tikzset{
  arrows along my path/.style={
    postaction={
      decorate,
      decoration={
        markings,
        mark=between positions 0.03 and 1 step 10pt with {\arrow{Stealth[length=5pt]}},
   }}}}
\providecommand{\U}[1]{\protect\rule{.1in}{.1in}}
\theoremstyle{plain}
\newtheorem{thm}{Theorem}[section]
\theoremstyle{definition}
\newtheorem{defn}{Definition}
\theoremstyle{remark}
\newtheorem{rem}[thm]{Remark}
\newcommand{\C}{\mathbb{C}}
\newcommand{\N}{\mathbb{N}}
\newcommand{\cA}{\mathcal{A}}
\newcommand{\cB}{\mathcal{B}}
\newcommand{\cC}{\mathcal{C}}
\newcommand{\Dom}{{\sf Dom}}
\newcommand{\Tr}{{\sf Tr}}
\newcommand{\tr}{{\sf tr}}
\renewcommand{\leq}{\leqslant}
\renewcommand{\geq}{\geqslant}
\def\tr{{\rm tr}} 
\def\Tr{{\rm Tr}} 
\def\alg{\mathrm{alg}} 
\def\ideal{{\rm Ideal}} 
\def\Dom{{\rm D}} 
\def\ep{\varepsilon} 
\begin{document}
\title{Matrix models for cyclic monotone and monotone independences}
\author{Beno{\^i}t Collins \and Felix Leid \and Noriyoshi Sakuma}
\date{\today}
\maketitle

\begin{abstract}
Cyclic monotone independence is an algebraic notion of noncommutative independence, introduced in the study of multi-matrix random matrix models with small rank. Its algebraic form turns out to be surprisingly close  to monotone independence, which is why it was named cyclic monotone independence.
This paper conceptualizes this notion by showing that the same random matrix model is also a model for the monotone convergence with an appropriately chosen state. This observation provides a unified nonrandom matrix model for both types of monotone independences.
\end{abstract}

\section{Introduction}


Monotone independence was introduced by Muraki \cite{Muraki1996}, and Lu \cite{Lu1997} in the context of non-commutative probability theory.
Later on, Muraki \cite{Muraki1996, Muraki2001,Muraki2002,Muraki2003}, Hasebe \cite{Hasebe2010a,Hasebe2010b} and Hasebe and Saigo \cite{HS2011} developed monotone probability theory, which is a non-commutative probability theory with monotone independence, inspired by Voiculesu's free probability theory and Speicher's universal products \cite{Speicher1997}. The Construction of non-commutative probability spaces which realize monotone independence was achieved with the help of Fock spaces and universal products. This theory triggered substantial interest because monotone independence connects different subjects. For example,
Accardi, Ghorbal, and Obata \cite{AGO2004} realized monotone independence via the spectral analysis of the comb graph. Schleissinger \cite{Schleissinger2017} found a relation between monotone independence and SLE theory. The relation between Loewner chains and monotone probability theory is developing rapidly \cite{Jekel2020,FHS2020}. On the other hand, there was no natural (random) matrix model for monotone independence, unlike classical and free probability. 
The goal of this paper is to provide such a model.

Recently, motivated by the study of outliers in random matrix theory, Collins, Hasebe, and Sakuma developed in \cite{CHS2018} cyclic monotone independence. 
One cannot observe outliers from the empirical eigenvalue distributions of random matrices but their operator norm. To overcome this problem from the point of view of eigenvalue distributions, they proposed considering noncommutative probability spaces with a weight. The weight corresponds to the non-normalized trace. 
Computations of moments evidenced the notion of cyclic monotone independence -- a rule to compute joint moments that is quite 
similar to monotone convolution, with the additional property that it conserves traciality. 
This similarity was left in our previous paper as a curiosity to elucidate. However, it raised the natural question of the relation between both notions and the existence of a unified model for both convolutions. 

This paper is organized as follows. In Section \ref{section-2}, we recall the notions of monotone and cyclically monotone independences, and then state and prove a theoretical result about the structure of the free product algebra quotiented by the monotone (resp. cyclically monotone) free product state (Theorem \ref{main-abstract}).
In Section \ref{section-3}, we apply the result and provide a unified matrix model for monotonically (resp. free monotonically) free variables.
After that, we discuss random matrix models for monotone and cyclically monotone independences. 

\section{Notation and abstract result}\label{section-2}
Let us first review basic notations for monotone and cyclically monotone independences. For details, see \cite{CHS2018}.
A \emph{non-commutative measure space} is a pair $(\cA,\omega)$, where $\cA$ is a (unital or non-unital) $\ast$-algebra over $\C$. 
Let $\omega$ be a tracial weight, namely:  
\begin{itemize} 
\item $\omega$ is defined on a (possibly non-unital) $\ast$-subalgebra $\Dom(\omega)$ of $\cA$ and $\omega: \Dom(\omega)\to\C$ is linear,  
\item $\omega$ is positive, i.e.\ $\omega(a^* a) \geq0$ for every $a\in\Dom(\omega)$, 
\item $\omega(a^*)=\overline{\omega(a)}$ for all $a \in \Dom(\omega)$, 
\item $\omega(a b) = \omega(b a)$ for all $a,b \in\Dom(\omega)$. 
 \end{itemize}
Moreover, if $\cA$ is unital, $\Dom(\omega)=\cA$ and $\omega(1_\cA)=1$ then we call $(\cA,\omega)$ {\it a non-commutative probability space}.

Let  $(\cA,\omega)$ be a non-commutative measure space and let $a_1,\ldots ,a_k \in\Dom(\omega)$. 
The {\it distribution} of $(a_1,\ldots ,a_k)$ is the family of {\it (mixed) moments} 
\begin{align*}
\{\omega(a_{i_{1}}^{\ep_1}\dots a_{i_{p}}^{\ep_p}): p\geq1, 1 \leq i_{1},\dots,i_{p}\leq k, (\ep_1,\dots, \ep_p)\in\{1,\ast\}^p\}.
\end{align*}

Given non-commutative measure spaces $(\cA,\omega), (\cB,\xi)$ and elements $a_1, \dots, a_k\in\Dom(\omega)$, $b_1,\dots, b_k \in \Dom(\xi)$, we say that $(a_1,\dots, a_k)$ has {\it the same distribution} as $(b_1,\dots, b_k)$ if 
\begin{align}
 \omega(a_{i_1}^{\varepsilon_{1}}\cdots a_{i_p}^{\varepsilon_{p}})
 = \xi(b_{i_1}^{\varepsilon_{1}}\cdots b_{i_p}^{\varepsilon_{p}})
\end{align}
for any choice of $p\in\N$, $1\leq i_{1},\dots,i_{p}\leq k$ and $(\varepsilon_1,\dots, \varepsilon_p)\in\{1,\ast\}^p$. 



Let $(\cC,\tau,\omega)$ be a non-commutative probability space with a tracial weight $\tilde{\omega}$ (or $\omega$). 
Let $\cA, \cB$ be $\ast$-subalgebras of $\cC$ such that $1_\cC \in \cB$.
Let $\ideal_\cB(\cA)$ be the ideal generated by $\cA$ over $\cB$.
More precisely, 
\begin{align*}
\ideal_\cB(\cA):= \mathrm{span}\{b_0 a_1 b_1 \cdots a_n b_n: n \in \N, a_1,\dots, a_n \in \cA, b_0,\dots, b_n\in\cB\}, 
\end{align*}
which is a $*$-subalgebra of $\cC$ containing $\cA$.

We start with the definition of monotone independence.
\begin{defn}\label{construction} 
\begin{enumerate}[\rm(1)]
\item We say that the pair $(\cA,\cB)$  is {\it monotonically independent} 
with respect to $(\tau,\tilde{\omega})$ if 
\begin{itemize}
\item $\ideal_\cB(\cA) \subset \Dom(\tilde{\omega})$;  
\item for any $n \in \N, a_1,\dots, a_n \in \cA$ and any $b_1,\dots, b_n\in\cB$, we have that 
\begin{align*}
&\tilde{\omega}(b_{0}a_1 b_1a_2b_2 \cdots a_n b_n) =\tilde{\omega}(a_1a_2\cdots a_n) \tau(b_0)\tau(b_1)\tau(b_2)\cdots \tau(b_n),\\    
\end{align*}
\end{itemize}
\item Given $a_1,\dots, a_k \in \Dom(\omega)$ and $b_1,\dots, b_\ell \in \cC$, the pair $(\{a_1,\dots, a_k\},\{b_1,\dots, b_\ell\})$ is {\it monotone} 
if $(\alg\{a_1,\dots, a_k\}, \alg\{1_\cC,b_1,\dots, b_\ell\})$ is monotone.
Note that we {\it do not} assume that $\alg\{a_1,\dots, a_k\}$ contains the unit of $\cC$. 
\end{enumerate}
\end{defn}
Next, we recall the definition of cyclic monotone independence.
\begin{defn}\label{construction-cyclic} 
\begin{enumerate}[\rm(1)]
\item 
We say that the pair $(\cA,\cB)$  is 
{\it cyclically monotonically independent} 
with respect to $(\tau,\omega)$ if 
\begin{itemize}
\item $\ideal_\cB(\cA) \subset \Dom(\omega)$;  
\item for any $n \in \N, a_1,\dots, a_n \in \cA$ and any $b_1,\dots, b_n\in\cB$, we have that 
\begin{align*}
&\omega(b_{0}a_1 b_1a_2b_2 \cdots a_n b_n) =\omega(a_1a_2\cdots a_n) \tau(b_1)\tau(b_2)\cdots \tau(b_n b_{0}).  
\end{align*}
\end{itemize}
\item Given $a_1,\dots, a_k \in \Dom(\omega)$ and $b_1,\dots, b_\ell \in \cC$, the pair $(\{a_1,\dots, a_k\},\{b_1,\dots, b_\ell\})$ is 
{\it cyclically monotone} if $(\alg\{a_1,\dots, a_k\}, \alg\{1_\cC,b_1,\dots, b_\ell\})$ is 
cyclically monotone. 
As in the previous definition, we {\it do not} assume that $\alg\{a_1,\dots, a_k\}$ contains the unit of $\cC$. 
\end{enumerate}
\end{defn}

%
We add one more definition: given $H$ an arbitrary vector space, we call $\textrm{End}_{\text{fin}}(H)$ the collection of finite rank endomorphisms on $H$. As a vector space, it is canonically isomorphic to $H^*\otimes H$.

We are particularly interested in the case of $H=\cB$. In this case, $H^*\otimes H$ becomes $\textrm{End}_{\text{fin}}(\cB)$.

If $(\cB,\tau)$ is a unital noncommutative probability space such that 
\begin{align*}
    \cB=1\cdot\C\oplus \ker(\tau)\colon=1\cdot\C\oplus \mathring{\mathcal{B}}.
\end{align*}
we define $\psi_\tau$ on 
$\textrm{End}_{\text{fin}}(\cB)$
as the linear extension of
$$\psi_\tau (h_1^{\ast}\otimes h_2)=
h_1^*(1)\tau (h_2).$$
Intuitively, it is the upper left coefficient of the matrix of the endomorphism. $\psi_\tau$ is defined on $\textrm{End}_{\text{fin}}(\cB)$ and depends on $\tau$ however we will omit this dependence in the notation and write $\psi$.


Let $(\cA,\omega_0)$ be a (non unital) measure space and $(\cB,\tau)$ a noncommutative probability space such that 
\begin{align*}
    \cB=1\cdot\C\oplus \ker(\tau):=1\cdot\C\oplus \mathring{\mathcal{B}}.
\end{align*}
It means that, for any $b\in\mathcal{B}$, there exist $\mathring{b}\in \mathring{\mathcal{B}}$ such that $b = \mathring{b} + \tau(b)1$.

Consider
\begin{align*}
    \omega:=\omega_0\trianglerighteq\tau\colon \mathrm{D}(\omega)\to\C,\quad \tilde{\omega}:=\omega_0\triangleright\tau\colon\mathrm{D}(\tilde{\omega})\to\C,
\end{align*}
i.e., the cyclic monotone product and the monotone product of $\omega_{0}$ and $\tau$. 

Here we shall give a motivating example. Assume that $b\not=0$, $a\in\mathcal{A}$ and $a\not=0$.
We have
\begin{align*}
\omega(abab) 
&= \omega(a(\mathring{b} + \tau(b)1)a(\mathring{b} + \tau(b)1))
\\
&= \omega_{0}(a^{2})\tau(\mathring{b})^{2}
+2\tau(b)\omega_{0}(a^{2})\tau(\mathring{b})
+\tau(b)^{2}\omega_{0}(a^{2})
= \tau(b)^{2}\omega_{0}(a^{2})
\end{align*}
If $\tau(b)=0$ i.e. $b = \mathring{b}$ then $\omega(abab)=0$.
On the other hand, 
\begin{align*}
\omega(abba) 
&= \omega(a(\mathring{b} + \tau(b)1)(\mathring{b} + \tau(b)1)a)
\\
&= \omega_{0}(a^{2})\tau(\mathring{b}^{2})
+\tau(b)^{2}\omega_{0}(a^{2})
= \omega_{0}(a^{2})\tau(\mathring{b}^{2}) + \tau(b)^{2}\omega_{0}(a^{2})
\end{align*}
If $\tau(b)=0$ i.e. $b = \mathring{b}$ then $\omega(abba)=\omega_{0}(a^{2})\tau(\mathring{b}^{2})>0$.

We have the following rules:
for non-zero $a\in\mathcal{A}$ and $\mathring{b} \in \mathring{\mathcal{B}}$, we have

\begin{table}[htp]
\caption{Values of $\omega(P(a,\mathring{b})Q(a,\mathring{b}))$}
\begin{center}
\begin{tabular}{|c|c|c|c|c|c|}
\hline
$P(a,\mathring{b})\backslash Q(a,\mathring{b})$& $a$ & $a\mathring{b}$ & $\mathring{b}a$ & $\mathring{b}a\mathring{b}$ & \text{others}\\
\hline
$a$ & + & 0 & 0 & 0 & 0\\
\hline
$a\mathring{b}$ & 0 & 0 & + & 0 & 0\\
\hline
$\mathring{b}a$ & 0 & + & 0 & 0 & 0\\
\hline
$\mathring{b}a\mathring{b}$ & 0 & 0 & 0 & + & 0\\
\hline 
\text{others} & 0 & 0 & 0 & 0 & 0\\
\hline 
\end{tabular}
\end{center}
\label{table1}
\caption{Values of $\tilde\omega(P(a,\mathring{b})Q(a,\mathring{b}))$}
\begin{center}
\begin{tabular}{|c|c|c|c|c|c|}
\hline
$P(a,\mathring{b})\backslash Q(a,\mathring{b})$& $a$ & $a\mathring{b}$ & $\mathring{b}a$ & $\mathring{b}a\mathring{b}$ & \text{others}\\
\hline
$a$ & + & 0 & 0 & 0 & 0\\
\hline
$a\mathring{b}$ & 0 & 0 & + & 0 & 0\\
\hline
$\mathring{b}a$ & 0 & 0 & 0 & 0 & 0\\
\hline
$\mathring{b}a\mathring{b}$ & 0 & 0 & 0 & 0 & 0\\
\hline 
\text{others} & 0 & 0 & 0 & 0 & 0\\
\hline 
\end{tabular}
\end{center}
\label{table2}
\end{table}

From this observation, we can formulate the following.
First, we define
$I$ and $J$ through the following equations:
\begin{align*}
    I&:=\mathrm{D}(\omega_0)\oplus(\mathrm{D}(\omega_0)\otimes\mathring{\mathcal{B}})\oplus(\mathring{\mathcal{B}}\otimes\mathrm{D}(\omega_0))\oplus(\mathring{\mathcal{B}}\otimes\mathrm{D}(\omega_0)\otimes\mathring{\mathcal{B}})\oplus \underbrace{(\mathrm{D}(\omega_0)\otimes\mathring{\mathcal{B}}\otimes\mathrm{D}(\omega_0))\oplus\dots}_{J}\\
    J&:=(\mathrm{D}(\omega_0)\otimes\mathring{\mathcal{B}}\otimes\mathrm{D}(\omega_0))\oplus\dots
\end{align*}
Namely, $J$ is the sum of all tensor products with at least three legs and at least one $\mathring{\mathcal{B}}$, so that, in particular:
\begin{align*}    
    I=\mathrm{D}(\omega_0)\oplus(\mathrm{D}(\omega_0)\otimes\mathring{\mathcal{B}})\oplus(\mathring{\mathcal{B}}\otimes\mathrm{D}(\omega_0))\oplus(\mathring{\mathcal{B}}\otimes\mathrm{D}(\omega_0)\otimes\mathring{\mathcal{B}})\oplus J.
    \end{align*}
    Then $I$ is an ideal in the free product $*$-algebra $\cA * \cB$. Note that this free product does not amalgamate
    over the unit (in the first place, there is no unit in $\cA$). The $*$-algebra $\cA * \cB$ is unital.
    In addition, $J$ is an ideal in $I$.
    
\begin{thm}\label{main-abstract}
    The ideal $J$ annihilates $\omega$ and $\tilde{\omega}$, and we have a canonical map
    \begin{align*}
        \chi: I\to I/J\cong \cB\otimes \mathrm{D}(\omega_0) \otimes\cB \cong  \mathrm{D}( \omega_0)\otimes\mathrm{End}_{\mathrm{fin}}(\cB),
    \end{align*}
    which satisfies the following two properties:
    $$\omega\circ \chi = \omega_0\otimes \Tr$$
    and 
    $$\tilde \omega\circ \chi = \omega_0\otimes \psi$$

\end{thm}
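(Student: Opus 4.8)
The plan is to reduce every computation to the reduced-word picture of $I$ and, ultimately, to a single word $b\,a\,b'$ with exactly one leg in $\cA$. First I would dispose of the vanishing claim directly from Definitions~\ref{construction} and~\ref{construction-cyclic}. A spanning element of $J$ is a reduced word $b_0a_1b_1\cdots a_nb_n$ with $n\geq 2$ alternating between legs in $\mathrm{D}(\omega_0)$ and legs in $\mathring{\cB}$; in particular the internal factors $b_1,\dots,b_{n-1}$ lie in $\mathring{\cB}=\ker\tau$. The monotone rule then yields $\tilde\omega(b_0a_1\cdots a_nb_n)=\omega_0(a_1\cdots a_n)\,\tau(b_0)\tau(b_1)\cdots\tau(b_n)$ and the cyclic monotone rule yields $\omega(b_0a_1\cdots a_nb_n)=\omega_0(a_1\cdots a_n)\,\tau(b_1)\cdots\tau(b_{n-1})\,\tau(b_nb_0)$. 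Since $n\geq 2$, at least the factor $\tau(b_1)$ occurs with $b_1\in\mathring{\cB}$, so both products contain a zero factor and $\omega$, $\tilde\omega$ vanish on $J$. Consequently both functionals descend to $I/J$.

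Next I would build $\chi$ together with its two target descriptions. Expanding $\cB=\C 1\oplus\mathring{\cB}$ in both outer legs splits $\cB\otimes\mathrm{D}(\omega_0)\otimes\cB$ into the four summands $\mathrm{D}(\omega_0)$, $\mathrm{D}(\omega_0)\otimes\mathring{\cB}$, $\mathring{\cB}\otimes\mathrm{D}(\omega_0)$ and $\mathring{\cB}\otimes\mathrm{D}(\omega_0)\otimes\mathring{\cB}$, which are precisely the four single-$\cA$ summands of $I$ forming a complement to $J$. Hence the quotient map $\chi\colon I\to I/J$ is, on this complement, a linear isomorphism onto $\cB\otimes\mathrm{D}(\omega_0)\otimes\cB$, characterized by $\chi(b\,a\,b')=b\otimes a\otimes b'$ for $a\in\cA$, $b,b'\in\cB$, and by $\chi=0$ on words with at least two $\cA$-legs. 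Reordering tensor legs identifies this with $\mathrm{D}(\omega_0)\otimes(\cB\otimes\cB)$, and I would identify $\cB\otimes\cB\cong\mathrm{End}_{\mathrm{fin}}(\cB)=\cB^*\otimes\cB$ through the pairing induced by $\tau$, namely $b\otimes b'\mapsto\bigl(h\mapsto\tau(bh)\,b'\bigr)=\tau(b\,\cdot)\otimes b'$.

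It then remains to verify the two trace formulas, and by bilinearity and the reduction to the four pieces it is enough to check them on a word $b\,a\,b'$. Writing its image as the rank-one operator $T=h_1^*\otimes h_2$ with $h_1^*=\tau(b\,\cdot)$ and $h_2=b'$, I would compute $\Tr(T)=h_1^*(h_2)=\tau(bb')$ and $\psi(T)=h_1^*(1)\,\tau(h_2)=\tau(b)\tau(b')$. On the algebra side, the cyclic monotone rule gives $\omega(b\,a\,b')=\omega_0(a)\,\tau(b'b)$, and traciality of $\tau$ turns $\tau(b'b)$ into $\tau(bb')$, so $\omega\circ\chi=\omega_0\otimes\Tr$; the monotone rule gives $\tilde\omega(b\,a\,b')=\omega_0(a)\,\tau(b)\tau(b')$, so $\tilde\omega\circ\chi=\omega_0\otimes\psi$.

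The genuine difficulty is not the moment bookkeeping but making the identification $\cB\otimes\cB\cong\mathrm{End}_{\mathrm{fin}}(\cB)$ precise: the assignment $b\mapsto\tau(b\,\cdot)$ embeds $\cB$ into $\cB^*$ but is a bijection only under nondegeneracy of the pairing, so I would either assume $\tau$ faithful (as holds for the finite-dimensional matrix models of the following section, where $\cB\cong\cB^*$) or read the target as the finite-rank operators realized through the $\tau$-pairing. It is worth emphasizing that traciality of $\tau$ is exactly the structural feature that converts the cyclic factor $\tau(b_nb_0)$ into the unnormalized trace $\Tr$, which is why the cyclic monotone state is modeled by $\Tr$ while the monotone state is modeled by the corner functional $\psi$.
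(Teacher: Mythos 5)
Your argument follows the same route as the paper's: every word with at least two $\cA$-legs has an internal $\cB$-leg lying in $\mathring{\cB}=\ker\tau$, which contributes a factor $\tau(b_i)=0$ to both the monotone and the cyclic monotone moment formulas, so $\omega$ and $\tilde\omega$ descend to $I/J$, which is then identified with $\mathrm{D}(\omega_0)\otimes\mathrm{End}_{\mathrm{fin}}(\cB)$ through the four single-$\cA$ summands. Your explicit verification of $\omega\circ\chi=\omega_0\otimes\Tr$ and $\tilde\omega\circ\chi=\omega_0\otimes\psi$ on a word $b\,a\,b'$, and your caveat that the identification $\cB\otimes\cB\cong\mathrm{End}_{\mathrm{fin}}(\cB)$ goes through the $\tau$-pairing and hence needs nondegeneracy (or should be read as a realization of finite-rank operators), supply details the paper leaves implicit, but the approach is the same.
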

\begin{rem}
About the cyclically monotone and monotone products $\trianglerighteq$, $\triangleright$, see, for details, page 1122 in \cite{CHS2018} and page 120 in \cite{Muraki2003}, respectively.
\end{rem}

\begin{proof}
First note that $\mathrm{D}(\omega) = \mathrm{D}(\tilde{\omega})=I$. We need to show that $J$ annihilates $\omega$ and $\tilde\omega$, take a monomial $b_0a_1b_1\dots a_n b_n=x\in J$, where $n>1$, then for any $b^\prime_0a^\prime_1b^\prime_1\dots a_m^\prime b_m^\prime=y\in I$ with $m>0$ we have
\begin{align*}
    \omega(xy)=\omega(yx)&=\omega_0(a_1\dots a_n a^\prime_1\dots a^\prime_m)\tau(b_0b^\prime_m)\tau(b_0^\prime b_n)\underbrace{\tau(b_1)}_{=0}\dots\tau(b_{n-1})\tau(b_1^\prime)\dots\tau(b_{m-1}^\prime)\\
    &=0.
\end{align*}
By linearity this extends to any elements $y\in I$ and $x\in J$. A similar calculation shows this for $\tilde\omega$.
Now clearly we have 
    \begin{align*}
        I/J&=\mathrm{D}(\omega_0)\oplus(\mathrm{D}(\omega_0)\otimes\mathring{\mathcal{B}})\oplus(\mathrm{D}(\omega_0)\otimes\mathring{\mathcal{B}})
        \oplus(\mathring{\mathcal{B}}\otimes\mathrm{D}(\omega_0)\otimes\mathring{\mathcal{B}}),
    \end{align*}
    the isomorphisms in the theorem are given by the identification
    \begin{align*}
        \mathrm{D}(\omega_0)\cong 1_B\otimes \mathrm{D}(\omega_0)\otimes 1_B, \mathrm{D}(\omega_0)\otimes \mathring{\mathcal{B}}\cong 1_B\otimes \mathrm{D}(\omega_0)\otimes \mathring{\mathcal{B}},\mathring{\mathcal{B}}\otimes \mathrm{D}(\omega_0)\cong\mathring{\mathcal{B}}\otimes \mathrm{D}(\omega_0)\otimes 1_B,
    \end{align*}
    and the usual identification $B\otimes B\cong\textrm{End}_{\textrm{fin}}(\cB)$. The diagram is due to the universal property of the quotient $I/J$.
\end{proof}

\section{Matrix model}\label{section-3}

This section is an application of the result of the previous section: it exhibits a matrix model for monotone independence and cyclically monotone independence. Basically, this is a concrete version of the above abstract result.

\subsection{Setup}

Let us denote by $M_\infty(\mathbb{C})$ the inductive limit given by the non unital embeddings $M_n(\mathbb{C})$ into $M_m(\mathbb{C})$ for $n<m$
\begin{align}\label{setup-inductive}
    f_{n,m}\colon M_n(\mathbb{C})\to M_m(\mathbb{C}),\quad a\mapsto\begin{pmatrix}
    a &0\\
    0 &0
    \end{pmatrix},
\end{align}
i.e. we are plugging $a \in M_{n}(\mathbb{C})$ into the left upper corner and padding it by zeros. Note that these embeddings are compatible with the non-normalized traces on $\Tr_n$ on $M_n(\mathbb{C})$, i.e. $\Tr_m\circ f_{m,n}=\Tr_n$. This induces the trace $\Tr$ on $M_{\infty}(\mathbb{C})$. Given any collection of elements $a_1,\dots,a_p\in M_\infty(\mathbb{C})$, we may always  choose $n\in\mathbb{N}$ big enough such that $a_i\in M_n(\mathbb{C})$, i.e. in an an upper corner of size $n$ of $M_{\infty}(\C)$.

From now on let $(\cC,\tau,\omega)$ be a non-commutative probability space with a tracial weight $\omega$, where $\cA, \cB$ be $\ast$-subalgebras of $\cC$ and we assume that $\mathcal{A}=M_\infty (\mathbb{C})$. Moreover we consider self-adjoint elements $a_{1},\ldots, a_{p}  \in \cA$  and $b_{0},\ldots, b_{q}\in\cB$ such that $b_{0}=1$, $\tau(b_{i}) = 0$ for any $i$, and,
$b_{i} \bot b_{j}$, that is,
$\tau(b_{i}b_{j}) = \delta_{ij}$ if $i\not=j$.

\begin{rem}
There is no loss in making these assumptions on $b_i$ because we can simultaneously take their real and imaginary parts if they are not self-adjoint. As for orthogonality, we can subsequently make a Gram-Schmidt orthogonalization to ensure that this property is satisfied too.
\end{rem}


We consider a non-commutative polynomial $P$, obtained as a sum of monomial that all contain at least one $a_{i}$. 
In other words, $P$ belongs to the two-sided ideal $\ideal_\cB(\cA)$. This condition comes from the models in Collins Hasebe and Sakuma \cite{CHS2018}.
Such polynomials are of the form:
\begin{equation}
P:=P(a_{1},\ldots, a_{p}, b_{0},\ldots, b_{q}) 
=  \sum_{i_{1},i_{2}=0}^{q} \sum_{j_{1}=1}^{p} \lambda_{i_{1},i_{2},j_{1}} b_{i_{1}}a_{j_{1}}b_{i_{2}}
+ \cdots ,
\end{equation}
where ``$\cdots$'' stands for an alternating sum of monomials in $a_i,b_j, j\ne 0$, with at least two $a$'s.

In the following we model the polynomial, we denote by $I_n\in M_n(\mathbb{C})$ the identity matrix. Then we consider the operator $\tilde{b}_{j}\in M_{n}(\C)\otimes M_{2^q}(\C)$, $\tilde{b}_{j}$, $j=0,1,\ldots q$, defined by

\begin{align}\label{def-onb-b}
\begin{split}
\tilde{b}_{0,n} &=I_n\otimes I_{2^q}=:I_n\otimes B_0\\
\tilde{b}_{1,n} &= 
I_n \otimes 
\underbrace{J\otimes I_{2} \otimes \dots \otimes I_{2}}_{=:B_1,\,  q\text{ terms}}\\
\tilde{b}_{2,n} &= 
I_n\otimes
\underbrace{I_{2}\otimes J \otimes \dots \otimes I_{2}}_{=:B_2,\,q\text{ terms}}\\
&\,\,\vdots\\
\tilde{b}_{q,n} &=
I_n\otimes
\underbrace{I_{2}\otimes \dots \otimes I_{2} \otimes J}_{=:B_q,\,q\text{ terms}},
\end{split}
\end{align}
where
$J=\begin{pmatrix}
0 &1\\
1 &0
\end{pmatrix}
$.
The tensor model for $P$ in $M_{n}(\C)\otimes M_{2^q}(\C)$ is the matrix 
\begin{align*}
\tilde P:=\tilde P(a_{1},\ldots, a_{p}, \tilde b_{0,n},\ldots \tilde b_{q,n})
=  \sum_{i_{1},i_{2},j_{1}} \lambda_{i_{1},i_{2},j_{1}}\tilde b_{i_{1},n}\cdot \phi (a_{j_{1}})\cdot \tilde b_{i_{2},n},
\end{align*}
where 
\begin{align*}
E_{11}=\begin{pmatrix}
1 &0\\
0 &0
\end{pmatrix}
\text{ and }
\phi(a) := a \otimes E_{11}^{\otimes q}
=
\begin{pmatrix}
a & O & \dots O\\
O & O & \dots O\\
O & O & \dots O
\end{pmatrix}
\in M_{n}(\C)\otimes M_{2^q}(\C)
\end{align*}
 and we assumed $n\in\mathbb{N}$ large enough to fit $a_i\in M_n(\mathbb{C})$ as described earlier.

\subsection{A tensor model for cyclically monotone independence}

Now, we assume that
the pair $(\cA,\cB)$  is cyclically monotone with respect to $(\tau,\omega)$.

Then the moments of $P$ under $\omega$ can be obtained from the tensor model $\tilde P$ in the following sense.
\begin{thm}\label{thm-cyclic}
We have $\omega(P(a_{1},\ldots, a_{p}, b_{0},\ldots, b_{q})^{k}) 
= 
\mathrm{Tr}_{n}\otimes \mathrm{Tr}_{2}^{\otimes q}(\tilde P(a_{1},\ldots, a_{p}, \tilde b_{0,n},\ldots, \tilde b_{q,n})^{k})$.
\end{thm}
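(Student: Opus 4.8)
The plan is to reduce the statement to the abstract Theorem~\ref{main-abstract} by verifying that the concrete tensor model realizes the canonical map $\chi$ together with the state $\omega_0\otimes\Tr$. First I would record the key algebraic identities satisfied by the matrices $B_1,\dots,B_q$ and $E_{11}^{\otimes q}$ in $M_{2^q}(\C)$. Each $B_j$ is a tensor of identities with a single $J=\left(\begin{smallmatrix}0&1\\1&0\end{smallmatrix}\right)$ in the $j$-th slot, so the $B_j$ satisfy $B_j^2=I_{2^q}$, they commute, and distinct $B_j$ multiply to a tensor with $J$'s in the corresponding slots. Crucially, $E_{11}^{\otimes q}$ is the rank-one projection onto the first basis vector, and $E_{11}^{\otimes q}B_{i_1}\cdots B_{i_r}E_{11}^{\otimes q}$ is nonzero (and equal to $E_{11}^{\otimes q}$) precisely when the word $B_{i_1}\cdots B_{i_r}$ equals $I_{2^q}$, i.e. when each index $1,\dots,q$ appears an even number of times. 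This mod-$2$ counting is exactly the combinatorics of $\tau(b_{i_1}\cdots b_{i_r})$ under the orthonormality hypotheses $\tau(b_i b_j)=\delta_{ij}$, $\tau(b_i)=0$: a product of the $b_i$ survives under $\tau$ iff the indices pair up, which for an orthonormal family supported on $\pm1$ combinatorics matches the $B_j$ relations.

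Next I would expand $\tilde P^k$ as a sum over words. Because $\phi(a_j)=a_j\otimes E_{11}^{\otimes q}$ carries the projection $E_{11}^{\otimes q}$ in the second leg, every occurrence of an $a$-factor sandwiches the intervening $B$-word between two copies of $E_{11}^{\otimes q}$. Thus in the product $\tilde P^k$ the second-leg factor of each monomial is a product of the form $E_{11}^{\otimes q}(\text{word in }B)E_{11}^{\otimes q}(\text{word in }B)\cdots$, and applying $\Tr_n\otimes\Tr_2^{\otimes q}$ factorizes as $\Tr_n$ on the first leg times $\Tr_2^{\otimes q}$ on the second. The trace over the second leg of such a sandwiched word is $1$ exactly when every block of $B$'s lying strictly between consecutive $a$-factors (read cyclically, because of the trace) multiplies to $I_{2^q}$, and is $0$ otherwise; this is precisely the product $\tau(b_{i_1})\tau(b_{i_2})\cdots\tau(b_{i_{n-1}})\tau(b_{i_n}b_{i_0})$ appearing in the cyclic monotone rule of Definition~\ref{construction-cyclic}, where the cyclic pairing of the outer $b$-blocks reflects the trace's cyclicity. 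Meanwhile $\Tr_n$ on the first leg reproduces $\omega_0(a_{j_1}\cdots a_{j_n})$, since $\Tr_n$ is the weight on $\cA=M_\infty(\C)$.

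I would then assemble these two computations: summing over all words contributing to $\tilde P^k$ and matching term-by-term with the monomial expansion of $\omega(P^k)$ via the cyclic monotone factorization, the two sides agree. The cleanest way to phrase this is to observe that the map $a\otimes(h_1^*\otimes h_2)\mapsto$ (its tensor-model realization) intertwines $\chi$ with the concrete embedding, so that $\Tr_n\otimes\Tr_2^{\otimes q}$ on the model equals $\omega_0\otimes\Tr$ on $I/J$, whence Theorem~\ref{main-abstract} gives $\omega\circ\chi=\omega_0\otimes\Tr$ and closes the argument. The main obstacle I expect is the second-leg trace bookkeeping: one must show carefully that $\Tr_2^{\otimes q}$ of a sandwiched $B$-word equals the cyclic product of $\tau$-values, including correctly handling the cyclic wrap-around (the $\tau(b_n b_0)$ term) produced by the trace and verifying that all monomials with an unpaired index—equivalently all contributions from the annihilated ideal $J$—vanish in both the analytic state and the matrix trace. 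Establishing this dictionary between the $\pm1$ relations of $J$ and the orthogonality relations of the $b_i$ is the technical heart; once it is in place, the factorization of the tensor trace and the appeal to Theorem~\ref{main-abstract} are routine.
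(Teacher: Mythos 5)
Your proposal is correct and takes essentially the same route as the paper's proof: expand $P^k$, discard all monomials containing at least two $a$'s (they lie in the ideal $J$ of Theorem \ref{main-abstract} and are annihilated both by $\omega$ and by $\mathrm{Tr}_2^{\otimes q}$, since the $(1,1)$-entry of $B_j$ vanishes for $j\neq 0$), and match the delta-function factorizations produced by $\tau(b_ib_j)=\delta_{ij}$ on the abstract side with those produced by $(B_iB_j)_{11}=\delta_{ij}$ on the matrix side, including the cyclic wrap-around pairing $\tau(b_{i_{2k}}b_{i_1})$. One small caveat: your assertion that $\tau(b_{i_1}\cdots b_{i_r})$ is governed by mod-$2$ counting of indices is not justified for $r\geq 3$ (orthonormality of the $b_i$ does not determine their higher mixed moments), but this does no harm here because every block of $b$'s occurring between consecutive $a$-factors in $P^k$ has length at most two, which is exactly the range where your dictionary with the commuting involutions $B_j$ is valid.
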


\begin{proof}

Let us start by computing the $k$-moment of $P$. Recall that by the main theorem, only the terms containing one $a$ contribute to the calculation
\begin{align*}
    \omega(P^k)&=\omega\left(\left(
    \sum_{i_{1},i_{2}=0}^{q} \sum_{j_{1}=1}^{p} \lambda_{i_1,i_2,j_1} b_{i_1} a_{j_1} b_{i_{2}}+\cdots
    \right)^k\right)\\
    &= \sum_{i_{1},i_{2},\ldots,i_{2k}=0}^{q} \sum_{j_{1},\ldots, j_{k}=1}^{p} 
    \left(
    \prod_{r=1}^{k}\lambda_{i_{2r-1},i_{2r},j_{r}}
    \right)
    \omega(b_{i_1} a_{j_1} b_{i_{2}}\dots b_{i_{2k-1}} a_{j_k} b_{i_{2k}})\\
    &= \sum_{i_{1},i_{2},\ldots,i_{2k}=0}^{q} \sum_{j_{1},\ldots, j_{k}=1}^{p}  
    \left(
    \prod_{r=1}^{k}\lambda_{i_{2r-1},i_{2r},j_{r}}
    \right)
    \omega(a_{j_1} \dots  a_{j_k} ) \tau(b_{i_{2k}}b_{i_{1}})\tau(b_{i_{2}}b_{i_{3}})
    \dots \tau(b_{i_{2k-2}}b_{i_{2k-1}})\\
    &= \sum_{i_{1},i_{2},\ldots,i_{2k}=0}^{q} \sum_{j_{1},\ldots, j_{k}=1}^{p} 
    \left(
    \prod_{r=1}^{k}\lambda_{i_{2r-1},i_{2r},j_{r}}
    \right)
    \omega(a_{j_1} \dots  a_{j_k} ) \delta_{i_{2k},i_{1}} \delta_{i_{2},i_{3}}
    \dots \delta_{i_{2k-2},i_{2k-1}}\\
    &= \sum_{i_1,i_3,\ldots,i_{2k-1}=0}^{q}\sum_{j_{1},\ldots,j_{k}=1}^{p} 
    \left(
    \prod_{r=1}^{k}\lambda_{i_{2r-1},i_{2r+1},j_{r}}
    \right)
    \omega(a_{j_1} \dots  a_{j_k} ).
\end{align*}

We show that it is equal to $ \mathrm{Tr}_{n}\otimes \mathrm{Tr}_{2}^{\otimes q}(\tilde P(a_{1},\ldots, a_{p}, b_{0},\ldots, b_{q})^{k})$.
We have
\begin{align*}
    \mathrm{Tr}_{2}^{\otimes q} (B_{i_1}E_{11}^{\otimes q} B_{i_{2}}\dots B_{i_{2k-1}} E_{11}^{\otimes q}B_{i_{2k}})
    =\begin{cases}
    1 & i_{2k} = i_{1},\ldots, i_{2k-2} = i_{2k-1}\\
    0 &\text{otherwise}
    \end{cases}
\end{align*}
from $J E_{11} J = \begin{pmatrix}0& 0\\0 & 1 \end{pmatrix}$ and $J E_{11} J E_{11} = 0$.
For example, if we consider the $q=2$ case, we have 
\begin{align*}
    &\mathrm{Tr}_{2}^{\otimes 2} (B_{1}E_{11}^{\otimes 2} B_{2} B_{2} E_{11}^{\otimes 2}B_{1})
    =\mathrm{Tr}_{2}(J E_{11} J) \mathrm{Tr}_2(E_{11}) = 1, \\
    &\mathrm{Tr}_{2}^{\otimes 2} (B_{1}E_{11}^{\otimes 2} B_{2} B_{1} E_{11}^{\otimes 2}B_{2})
    =\mathrm{Tr}_{2}(JE_{11}JE_{11}) \mathrm{Tr}_{2}(E_{11}JE_{11}J) = 0.
\end{align*}
Thus, we obtain
\begin{align*}
    &\mathrm{Tr}_{n}\otimes \mathrm{Tr}_{2}^{\otimes q}(\tilde P(a_{1},\ldots, a_{p}, \tilde b_{0},\ldots, \tilde b_{q})^{k})\\
    &=\sum_{i_{1},i_{2},\ldots,i_{2k}=0}^{q} \sum_{j_{1},\ldots, j_{k}=1}^{p}  
    \left(
    \prod_{r=1}^{k}\lambda_{i_{2r-1},i_{2r},j_{r}}
    \right)
    \mathrm{Tr}_{n}(a_{j_1} \dots  a_{j_k} )
    \mathrm{Tr}_{2}^{\otimes q} (B_{i_1}E_{11}^{\otimes q} B_{i_{2}}\dots B_{i_{2k-1}} E_{11}^{\otimes q} B_{i_{2k}})\\
    &=\sum_{i_{1},i_{2},\ldots,i_{2k}=0}^{q} \sum_{j_{1},\ldots, j_{k}=1}^{p}  
    \left(
    \prod_{r=1}^{k}\lambda_{i_{2r-1},i_{2r},j_{r}}
    \right)
    \mathrm{Tr}_{n}(a_{j_1} \dots  a_{j_k} ) \delta_{i_{2k},i_{1}} \delta_{i_{2},i_{3}}
    \dots \delta_{i_{2k-2},i_{2k-1}}\\
    &=\sum_{i_1,i_3,\ldots,i_{2k-1}=0}^{q} \sum_{j_{1},\ldots,j_{k}=1}^{p}  
    \left(
    \prod_{r=1}^{k}\lambda_{i_{2r-1},i_{2r+1},j_{r}}
    \right)
    \mathrm{Tr}_{n}(a_{j_1} \dots  a_{j_k} )\\
    &= \sum_{i_1,i_3,\ldots,i_{2k-1}=0}^{q} \sum_{j_{1},\ldots,j_{k}=1}^{p} 
    \left(
    \prod_{r=1}^{k}\lambda_{i_{2r-1},i_{2r+1},j_{r}}
    \right)
    \omega(a_{j_1} \dots  a_{j_k} ).
\end{align*}
Recall that we have $\mathrm{Tr}_{n}(a_{j_1} \dots  a_{j_k} ) = \omega(a_{j_1} \dots  a_{j_k} )$, which concludes the proof.
\end{proof}

\subsection{A tensor model for monotone independence}

As it follows from the main theorem, we can also treat monotone independence with the very same model,
provided that we modify the state. 
Let us assume that the pair $(\cA,\cB)$  is monotone independent with respect to $(\tau,\tilde{\omega})$.
\begin{thm}\label{thm-monotone}
Let 
$\eta (B)=b_{11}$ for $B=(B_{ij})_{ij} \in M_2(\C)$.
We have $\tilde{\omega}(P(a_{1},\ldots, a_{p}, b_{0},\ldots, b_{q})^{k}) = \mathrm{Tr}_{n}\otimes \eta^{\otimes q}(\tilde P(a_{1},\ldots, a_{p}, b_{0},\ldots, b_{q})^{k})$.
\end{thm}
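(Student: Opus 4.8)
The plan is to mirror the proof of Theorem~\ref{thm-cyclic}: evaluate the algebraic moment $\tilde\omega(P^k)$ with the monotone rule, evaluate the matrix quantity $\Tr_n\otimes\eta^{\otimes q}(\tilde P^k)$, and match the two term by term. First I would expand $P^k$ and invoke Theorem~\ref{main-abstract} to retain only the one-$a$ contributions $\sum_{i_1,i_2,j_1}\lambda_{i_1,i_2,j_1}b_{i_1}a_{j_1}b_{i_2}$. Multiplying $k$ such factors yields the word $b_{i_1}a_{j_1}(b_{i_2}b_{i_3})a_{j_2}\cdots a_{j_k}b_{i_{2k}}$, which matches the template $b_0a_1b_1\cdots a_kb_k$ of Definition~\ref{construction} with $b_0=b_{i_1}$, $b_r=b_{i_{2r}}b_{i_{2r+1}}$ for $1\le r\le k-1$, and $b_k=b_{i_{2k}}$. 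The monotone rule then gives $\tilde\omega(a_{j_1}\cdots a_{j_k})\,\tau(b_{i_1})\tau(b_{i_2}b_{i_3})\cdots\tau(b_{i_{2k-2}}b_{i_{2k-1}})\tau(b_{i_{2k}})$, and using $b_0=1$, $\tau(b_i)=0$ for $i\neq0$, and $\tau(b_ib_j)=\delta_{ij}$, this collapses to $\tilde\omega(a_{j_1}\cdots a_{j_k})\,\delta_{i_1,0}\,\delta_{i_2,i_3}\cdots\delta_{i_{2k-2},i_{2k-1}}\,\delta_{i_{2k},0}$. This is precisely where the monotone and cyclic cases diverge: the first and last indices are each forced to be $0$, rather than being paired cyclically as the factor $\delta_{i_{2k},i_1}$ in Theorem~\ref{thm-cyclic}.

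For the matrix side, the core computation is the scalar $\eta^{\otimes q}\big(B_{i_1}E_{11}^{\otimes q}(B_{i_2}B_{i_3})E_{11}^{\otimes q}\cdots E_{11}^{\otimes q}B_{i_{2k}}\big)$. Since each $B_i$ and each $E_{11}^{\otimes q}$ is a tensor product over the $q$ slots and $\eta^{\otimes q}$ is the product of the slot-wise $(1,1)$-entries, I would treat each slot $\ell$ separately. In slot $\ell$ the matrix $B_i$ contributes $J$ if $i=\ell$ and $I_2$ otherwise; write $F_\ell(i)$ for this contribution, so the slot word is a product of $J$'s, $I_2$'s, and the pinning factors $E_{11}$. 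Applying $E_{11}XE_{11}=X_{11}E_{11}$ repeatedly collapses the slot word to $\prod_{r=0}^k(\mathrm{block}_r)_{11}$, where $\mathrm{block}_0=F_\ell(i_1)$ depends only on $i_1$, $\mathrm{block}_k=F_\ell(i_{2k})$ only on $i_{2k}$, and the interior blocks are the pairs $F_\ell(i_{2r})F_\ell(i_{2r+1})$. Reading off $(1,1)$-entries via $(I_2)_{11}=(J^2)_{11}=1$ and $(J)_{11}=0$ shows the product over all slots equals $1$ exactly when $i_1=i_{2k}=0$ and $i_{2r}=i_{2r+1}$ for each interior $r$, and $0$ otherwise; that is, the scalar equals $\delta_{i_1,0}\,\delta_{i_2,i_3}\cdots\delta_{i_{2k-2},i_{2k-1}}\,\delta_{i_{2k},0}$.

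With both sides reduced to the same delta pattern, I would finish as in Theorem~\ref{thm-cyclic}: substitute the value of $\eta^{\otimes q}$ into $\Tr_n\otimes\eta^{\otimes q}(\tilde P^k)$, use $\Tr_n(a_{j_1}\cdots a_{j_k})=\tilde\omega(a_{j_1}\cdots a_{j_k})$ (both restrict to the non-normalized trace on $\cA=M_\infty(\C)$), and observe that the resulting sum is identical to the one computed for $\tilde\omega(P^k)$. The main obstacle is the slot-by-slot matrix computation: one must check carefully that $\eta$, unlike $\Tr_2$, does not close the word into a loop but instead pins both ends to the $(1,1)$ corner, converting the cyclic boundary identification $\delta_{i_{2k},i_1}$ into the two separate constraints $\delta_{i_1,0}$ and $\delta_{i_{2k},0}$. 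Tracking which index lands in which interior pair, and confirming that $J^2=I_2$ makes a doubled index behave like a trivial one, is the delicate bookkeeping that makes the identity come out correctly.
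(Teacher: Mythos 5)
Your proposal is correct and follows essentially the same route as the paper: expand $\tilde\omega(P^k)$ via the monotone rule to get the delta pattern $\delta_{i_1,0}\,\delta_{i_2,i_3}\cdots\delta_{i_{2k-2},i_{2k-1}}\,\delta_{i_{2k},0}$, show that $\eta^{\otimes q}$ applied to the word $B_{i_1}E_{11}^{\otimes q}B_{i_2}\cdots B_{i_{2k-1}}E_{11}^{\otimes q}B_{i_{2k}}$ produces the same pattern (the paper cites $\eta(JE_{11}J)=0$ where you carry out the equivalent slot-by-slot reduction via $E_{11}XE_{11}=X_{11}E_{11}$), and match the sums using $\Tr_n=\tilde\omega$ on $\cA$. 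The only difference is presentational: your pinning argument is a more explicit version of the paper's one-line matrix identity.
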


\begin{proof}
First, note that similar to the cyclically monotone case, we may omit all terms containing at least two $a$'s. By monotone independence, we have for the remaining terms:
\begin{align*}
    &\tilde{\omega}(P^k) 
    = \sum_{i_{1},i_{2},\ldots,i_{2k}=0}^{q} \sum_{j_{1},\ldots, j_{k}=1}^{p}  
    \left(\prod_{r=1}^{k}\lambda_{i_{2r-1},i_{2r},j_{r}}\right)
    \tilde{\omega}(a_{j_1} \dots  a_{j_k} ) \tau(b_{i_{2k}})\tau(b_{i_{1}})\tau(b_{i_{2}}b_{i_{3}})
    \dots \tau(b_{i_{2k-2}}b_{i_{2k-1}})\\
    &= \sum_{i_{1},i_{2},\ldots,i_{2k}=0}^{q} \sum_{j_{1},\ldots, j_{k}=1}^{p} 
    \left(\prod_{r=1}^{k}\lambda_{i_{2r-1},i_{2r},j_{r}}\right)
    \tilde{\omega}(a_{j_1} \dots  a_{j_k} ) 
    \delta_{i_{2k},0}\delta_{i_{1},0}
    \delta_{i_{2},i_{3}} \dots \delta_{i_{2k-2},i_{2k-1}}\\
    &= \sum_{i_2,i_4,\ldots,i_{2k-2}=0}^{q}\sum_{j_{1},\ldots,j_{k}=1}^{p} 
    \lambda_{0,i_{2},j_{1}}
    \left(\prod_{r=2}^{k-1}\lambda_{i_{2r-2},i_{2r},j_{r}}\right)
    \lambda_{i_{2k-2},0,j_{k}}
    \tilde{\omega}(a_{j_1} \dots  a_{j_k} ).
\end{align*}
On the other hand, since $\eta(J E_{11} J) = 0$,
we obtain 
\begin{align*}
    &\mathrm{Tr}_{n}\otimes \eta^{\otimes q}(\tilde P^k)\\ 
    &= \sum_{i_{1},i_{2},\ldots,i_{2k}=0}^{q} \sum_{j_{1},\ldots, j_{k}=1}^{p} 
    \left(\prod_{r=1}^{k}\lambda_{i_{2r-1},i_{2r},j_{r}}\right)
    \mathrm{Tr}_{n}(a_{j_1} \dots  a_{j_k} )
    \eta^{\otimes q}
    (B_{i_{1}} E_{11}^{\otimes q} B_{i_{2}}\cdots 
    B_{i_{2k-1}} E_{11}^{\otimes q} B_{i_{2k}})\\
    &= \sum_{i_{1},i_{2},\ldots,i_{2k}=0}^{q} \sum_{j_{1},\ldots, j_{k}=1}^{p}  
    \left(\prod_{r=1}^{k}\lambda_{i_{2r-1},i_{2r},j_{r}}\right)
    \mathrm{Tr}_{n}(a_{j_1} \dots  a_{j_k} )
    \delta_{i_{1}0}
    \delta_{i_{2}i_{3}}
    \dots
    \delta_{i_{2k-2}i_{2k-1}}
    \delta_{i_{2k}0}
    \\
    &= \sum_{i_2,i_4,\ldots,i_{2k-2}=0}^{q}\sum_{j_{1},\ldots,j_{k}=1}^{p} 
    \lambda_{0,i_{2},j_{1}}
    \left(\prod_{r=2}^{k-1}\lambda_{i_{2r-2},i_{2r},j_{r}}\right)
    \lambda_{i_{2k-2},0,j_{k}}
    \tilde{\omega}(a_{j_1} \dots  a_{j_k} ).
\end{align*}

\end{proof}

\subsection{Replacing tensors by limit swaps}

This subsection is a simple observation: the previous proofs rely on the same model
that relies on tensors and considers two different states -- one for monotonically cyclic
independence, and one for monotone independence. 
Here, we show that in a context of a \emph{sequence} of matrix models, we can
avoid resorting to tensors. 

Let us first recall that we are interested in polynomials $\tilde P\in M_n(\C)\otimes M_2(\C)^{\otimes q}$.
In addition, $\tilde P$ depends tacitly on $n,q$ and is 
well defined for any $q,n$ large enough via the embedding described in Equation \eqref{setup-inductive}.
Likewise, with the same embedding, we 
and we will freely view $\tilde P$ (as a double sequence
in $n,q$ as elements of $M_\infty(\C)$.
In the sequel of this paper, we denote by $\eta_l$ the function
\begin{align*}
    \mathcal{A}=M_\infty(\C)\to\C,\quad x\mapsto \sum_{k=0}^l x_{kk}.
\end{align*}
This is sometimes called a partial trace, e.g. in the context of Horn inequalities 
(although it is not the partial trace of quantum information theory). We have 

\begin{thm}\label{thm-limits}
The following holds true
\begin{align*}
    \omega (P^k)= \lim_{n\to\infty}\lim_{l\to\infty}\eta_l (\tilde P^k),
\end{align*}
i.e. convergence to the cyclically monotone independent moments and
\begin{align*}
    \tilde\omega (P^k)=\lim_{l\to\infty}\lim_{n\to\infty}\eta_l (\tilde P^k),
\end{align*}
i.e. convergence to the monotone independent moments.
\end{thm}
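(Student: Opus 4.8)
The plan is to reduce both identities to the two closed forms already established: by Theorem \ref{thm-cyclic} we have $\omega(P^k)=\mathrm{Tr}_n\otimes\mathrm{Tr}_2^{\otimes q}(\tilde P^k)$, and by Theorem \ref{thm-monotone} we have $\tilde\omega(P^k)=\mathrm{Tr}_n\otimes\eta^{\otimes q}(\tilde P^k)$. The entire content of the statement is therefore to check that the two iterated limits of $\eta_l(\tilde P^k)$ reproduce, respectively, the full unnormalized trace on $M_n(\C)\otimes M_2(\C)^{\otimes q}$ and its $\eta^{\otimes q}$-weighted version. First I would record the structural decomposition $\tilde P^k=\sum_m c_m\otimes D_m$, where each $c_m=a_{j_1}\cdots a_{j_k}$ is a genuine product of the $a_j$'s (the $\tilde b_{i,n}$ contribute only $I_n$ in the first leg) and each $D_m\in M_2(\C)^{\otimes q}$ is the corresponding word in the $B_i$'s and $E_{11}^{\otimes q}$; crucially all $c_m$ lie in one fixed corner $M_{n_0}(\C)$, so their matrix entries do not depend on $n$ once $n\ge n_0$.

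Next I would fix the identification $M_n(\C)\otimes M_2(\C)^{\otimes q}\hookrightarrow M_\infty(\C)$ dictated by \eqref{setup-inductive}: since $\phi(a)=a\otimes E_{11}^{\otimes q}$ must sit in the top-left $n\times n$ corner (as $a\in\cA=M_\infty(\C)$ does), the $M_n$-index is the inner (fast) index and the $M_2(\C)^{\otimes q}$-index the outer (slow) one, with the block $(1,\dots,1)$ occupying the leading $n$ diagonal positions. Two facts then follow directly. (a) For fixed $n$ and $l\ge n2^q-1$, the window of $\eta_l$ covers the whole support of $\tilde P^k$, so $\eta_l(\tilde P^k)=\mathrm{Tr}_n\otimes\mathrm{Tr}_2^{\otimes q}(\tilde P^k)$. (b) For fixed $l$ and $n>l+1$, the first $l+1$ diagonal entries all lie in the $(1,\dots,1)$-block, whence $\eta_l(\tilde P^k)=\sum_m\big(\sum_{i=0}^{l}(c_m)_{ii}\big)\,(D_m)_{(1,\dots,1),(1,\dots,1)}$.

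For the cyclically monotone identity I would take the inner limit first: by (a), for each fixed (large) $n$ the sequence $\eta_l(\tilde P^k)$ is eventually constant in $l$ and equals $\mathrm{Tr}_n\otimes\mathrm{Tr}_2^{\otimes q}(\tilde P^k)=\omega(P^k)$ by Theorem \ref{thm-cyclic}; the outer limit in $n$ is then the limit of a constant sequence and gives $\omega(P^k)$. For the monotone identity I would use (b): for fixed $l$, the expression $\sum_m(\sum_{i=0}^{l}(c_m)_{ii})(D_m)_{(1,\dots,1),(1,\dots,1)}$ is independent of $n$ as soon as $n>\max(n_0,l+1)$ (the $c_m$ are frozen in $M_{n_0}(\C)$ and $D_m$ does not involve $n$), so the inner limit $n\to\infty$ exists and equals that expression; letting $l\to\infty$ completes each partial sum $\sum_{i=0}^{l}(c_m)_{ii}$ to $\mathrm{Tr}(c_m)=\mathrm{Tr}_{n_0}(c_m)$, yielding $\sum_m\mathrm{Tr}(c_m)(D_m)_{(1,\dots,1),(1,\dots,1)}=\mathrm{Tr}_n\otimes\eta^{\otimes q}(\tilde P^k)=\tilde\omega(P^k)$ by Theorem \ref{thm-monotone}.

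The step I expect to be the main obstacle is exactly the bookkeeping in the previous paragraph: one must verify that in the monotone order of limits the window of $\eta_l$ isolates the $(1,\dots,1)$-corner (reproducing $\eta^{\otimes q}$) rather than sweeping across all $2^q$ blocks (which would reproduce the full trace, i.e.\ the cyclically monotone answer). This is precisely what forces the $M_n$-leg to be the inner index and is what makes the two orders of limits genuinely inequivalent; once the identification is pinned down, the remaining interchange and stabilization arguments are routine, since everything is finite-rank and eventually constant in the inner variable.
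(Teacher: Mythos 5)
Your proposal is correct and follows essentially the same route as the paper: both reduce the statement to Theorems \ref{thm-cyclic} and \ref{thm-monotone} and then track which diagonal entries the partial trace $\eta_l$ sees under each order of limits, using that everything stabilizes in the inner variable. The only difference is one of explicitness — you spell out the block ordering of $M_n(\C)\otimes M_2(\C)^{\otimes q}$ inside $M_\infty(\C)$ and the decomposition $\tilde P^k=\sum_m c_m\otimes D_m$, where the paper compresses the same bookkeeping into the observation that $\eta_l(I_n\tilde P^k)=\eta_l(\tilde P^k)$ for $n\ge l$ — so no further comparison is needed.
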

\begin{proof}
First, recall that we have

\begin{align*}\omega(P(a_{1},\ldots, a_{p}, b_{0},\ldots, b_{q})^{k}) 
= \mathrm{Tr}(\tilde P(a_{1},\ldots, a_{p}, \tilde b_{0,n},\ldots, \tilde b_{q,n})^{k})
\end{align*}
by Theorem \ref{thm-cyclic}. Note that the right hand side has a dependency on the size $n$ that can be easily removed by letting $n\to\infty$ (taking $n$ large enough is sufficient in the proof), 
and we get
$$\omega(P(a_{1},\ldots, a_{p}, b_{0},\ldots, b_{q})^{k}) 
= \lim_{n\to\infty}\mathrm{Tr}(\tilde P(a_{1},\ldots, a_{p}, \tilde b_{0,n},\ldots, \tilde b_{q,n})^{k})$$
which proves the first claim.

On the other hand we note $\mathrm{Tr}=\lim_{l\to\infty} \eta_l$, and therefore we get
$$\omega(P(a_{1},\ldots, a_{p}, b_{0},\ldots, b_{q})^{k}) 
= \lim_{n\to\infty}\lim_{l\to\infty} \eta_l(\tilde P(a_{1},\ldots, a_{p}, \tilde b_{0,n},\ldots, \tilde b_{q,n})^{k})$$
Likewise, theorem \ref{thm-monotone} gives 
$$\tilde\omega(P(a_{1},\ldots, a_{p}, b_{0},\ldots, b_{q})^{k}) 
= \lim_{n\to\infty}\mathrm{Tr}(I_n \tilde P(a_{1},\ldots, a_{p}, \tilde b_{0,n},\ldots, \tilde b_{q,n})^{k})$$
Rewriting it as
$$\tilde\omega(P(a_{1},\ldots, a_{p}, b_{0},\ldots, b_{q})^{k}) 
= \mathrm{Tr}( \lim_{n\to\infty} I_n \tilde P(a_{1},\ldots, a_{p}, \tilde b_{0,n},\ldots, \tilde b_{q,n})^{k}),$$
we get
$$\tilde\omega(P(a_{1},\ldots, a_{p}, b_{0},\ldots, b_{q})^{k}) 
= \lim_{l\to\infty} \eta_l( \lim_{n\to\infty} I_n \tilde P(a_{1},\ldots, a_{p}, \tilde b_{0,n},\ldots, \tilde b_{q,n})^{k}),$$
but clearly, for $n\ge l$, we have $\eta_l (I_nP)=\eta_l (P)$ and therefore 
$$\tilde\omega(P(a_{1},\ldots, a_{p}, b_{0},\ldots, b_{q})^{k}) 
= \lim_{l\to\infty} \eta_l( \lim_{n\to\infty}  \tilde P(a_{1},\ldots, a_{p}, \tilde b_{0,n},\ldots, \tilde b_{q,n})^{k}),$$
which concludes the proof.
\end{proof}

\subsection{Example}
Let us illustrate our result with an example.
We consider a non-commutative probability space $(\mathcal{C},\tau,\omega)$ with a tracial weight $\omega$,
finite rank operator $a\in \mathrm{D}(\omega)$ with the eigenvalues $(2^{-1},2^{-2},2^{-3})$ and a operator $b\in\mathcal{B}$ with $\tau(b)=0$ and $\tau(b^2)=1$. 
We, in addition, assume that the operators $a$ and $b$ are cyclically monotone independent.

From our result we can give the following matrix model $A$ and $B$ for $a$ and $b$: 
\begin{align*}
    A=\mathrm{diag}{(2^{-1},2^{-2},2^{-3})}
    \otimes E_{11} 
    = 
    \left(
		\begin{array}{cccccc}
		\frac{1}{2} & 0 & 0 & 0 & 0 & 0 \\
		0 & \frac{1}{4} & 0 & 0 & 0 & 0 \\
		0 & 0 & \frac{1}{8} & 0 & 0 & 0 \\
		0 & 0 & 0 & 0 & 0 & 0 \\
		0 & 0 & 0 & 0 & 0 & 0 \\
		0 & 0 & 0 & 0 & 0 & 0 \\
		\end{array}
	\right),\quad
    B=I_{3} \otimes J
    = \left(
    	\begin{array}{cccccc}
		0 & 0 & 0 & 1 & 0 & 0 \\
		0 & 0 & 0 & 0 & 1 & 0 \\
		0 & 0 & 0 & 0 & 0 & 1 \\
		1 & 0 & 0 & 0 & 0 & 0 \\
		0 & 1 & 0 & 0 & 0 & 0 \\
		0 & 0 & 1 & 0 & 0 & 0 \\
		\end{array}
	\right). 
\end{align*}
We consider $X = A + BAB$ and $Y = AB + BA$:
\begin{align*}
X= 
\left(
\begin{array}{cccccc}
 \frac{1}{2} & 0 & 0 & 0 & 0 & 0 \\
 0 & \frac{1}{4} & 0 & 0 & 0 & 0 \\
 0 & 0 & \frac{1}{8} & 0 & 0 & 0 \\
 0 & 0 & 0 & \frac{1}{2} & 0 & 0 \\
 0 & 0 & 0 & 0 & \frac{1}{4} & 0 \\
 0 & 0 & 0 & 0 & 0 & \frac{1}{8} \\
\end{array}
\right),
\quad
Y=
\left(
\begin{array}{cccccc}
 0 & 0 & 0 & \frac{1}{2} & 0 & 0 \\
 0 & 0 & 0 & 0 & \frac{1}{4} & 0 \\
 0 & 0 & 0 & 0 & 0 & \frac{1}{8} \\
 \frac{1}{2} & 0 & 0 & 0 & 0 & 0 \\
 0 & \frac{1}{4} & 0 & 0 & 0 & 0 \\
 0 & 0 & \frac{1}{8} & 0 & 0 & 0 \\
\end{array}
\right)
\end{align*}
The lists of eigenvalues $X$ and $Y$ are 
\begin{align*}
    \mathrm{EV}(X) =\{ 2^{-1},2^{-1},2^{-2},2^{-2},2^{-3},2^{-3}\}
    \text{ and }
    \mathrm{EV}(Y) = \{ 2^{-1},-2^{-1},2^{-2},-2^{-2},2^{-3},-2^{-3}\},
\end{align*}
where $\mathrm{EV}(X)$ is the multi-set of eigenvalues of $X$.
They correspond with the eigenvalues of $a + bab$ and $ab + ba$, respectively.

\subsection{Random matrix models and concluding remarks}

This paper presents a matrix model for monotone and cyclically monotone independences, which is not random. 
In free probability, there exist random matrix models which are not random 
but random matrix models are much more common. 
Therefore, it is natural to wonder whether there is a random model in the case of monotone and cyclically monotone independences.
This turns out to be the case, and we can easily show that the model introduced by
Collins, Hasebe, and Sakuma in \cite{CHS2018}
is also a model for monotone independence, provided that
we consider $ \lim_l\lim_n\eta_l$ as our limiting state. 

\begin{thm}\label{thm-random}
Let $A_1^{(n)},\ldots , A_p^{(n)}, B_1^{(n)},\ldots , B_q^{(n)}\in M_n(\C)$ be matrices such that there is $C>0$ such that for any $m\in\N$, $i_1,\dots, i_m \in \{1,\dots,p\}$ and $j_1,\dots, j_m\in \{1,\dots, q\}$ we have
$$|\Tr (A_{i_1}^{(n)}\ldots A_{i_m}^{(n)})|\le C$$
and
$$|\tr (B_{j_1}^{(n)}\ldots B_{j_m}^{(n)})|\le C.$$ Moreover let $U=U(n)$ be a Haar unitary random matrix. Then
\begin{align*}
    \lim_{l\to\infty}\lim_{n\to\infty}\eta_l(UB_{i_0}^{(n)}U^*
A_{i_1}^{(n)} UB_{i_1}^{(n)}U^*\ldots A_{i_m}^{(n)} UB_{i_m}^{(n)}U^*))=\lim_{l\to\infty}\lim_{n\to\infty}\eta_l(A_{i_1}^{(n)} \ldots A_{i_m}^{(n)})\tr (B_{i_1}^{(n)})\ldots \tr (B_{i_m}^{(n)}).
\end{align*}
\end{thm}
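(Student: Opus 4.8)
The plan is to pass to expectations, evaluate $\mathbb{E}[\eta_l(\cdots)]$ by Weingarten calculus with $n\to\infty$ for each fixed $l$, isolate the unique surviving pairing, and only then let $l\to\infty$; the deterministic statement is the almost sure form, recovered at the end from a variance bound. Fix $l$ and expand each conjugated block as $(UB_{i_r}^{(n)}U^*)_{x,y}=\sum_{s_r,t_r}U_{x,s_r}(B_{i_r}^{(n)})_{s_r,t_r}\overline U_{y,t_r}$, so the word is a sum over internal indices of a monomial with $m+1$ entries of $U$ and $m+1$ of $\overline U$; the outer row of the first $U$ and the outer column of the last $\overline U$ are the same anchored index $\kappa$, which $\eta_l$ restricts to $\{0,\dots,l\}$. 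Applying $\mathbb{E}[\prod_r U_{x_r s_r}\prod_r\overline U_{y_r t_r}]=\sum_{\sigma,\tau\in S_{m+1}}\prod_r\delta_{x_r,y_{\sigma(r)}}\,\delta_{s_r,t_{\tau(r)}}\,\Wg(n,\tau\sigma^{-1})$ turns $\mathbb{E}[\eta_l(\cdots)]$ into a finite sum over $(\sigma,\tau)$.

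The next step is to read off the order in $n$ of each $(\sigma,\tau)$-term. The column pairing $\tau$ contracts the $B$-indices into $\prod_{c\in\mathrm{cyc}(\tau)}\Tr(\text{a product of the }B_{i_r}^{(n)})$; since $|\tr(B_{j_1}^{(n)}\cdots)|\le C$, each factor is $O(n)$ and the whole $B$-contribution is $O(n^{\#\mathrm{cyc}(\tau)})$. The row pairing $\sigma$ together with the anchor contracts the $A$-matrices into a product of closed loops $\Tr(\text{a product of the }A_{i_r}^{(n)})$ and at most one $\kappa$-anchored open segment; here the crucial point is that $|\Tr(A_{i_1}^{(n)}\cdots)|\le C$ and that the anchored segment, summed over $\kappa\in\{0,\dots,l\}$, is an $\eta_l$ of an $A$-word, so that the $A$-contribution is $O(1)$ for \emph{every} $(\sigma,\tau)$. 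Consequently the order of each term is exactly $\#\mathrm{cyc}(\tau)+\bigl(\#\mathrm{cyc}(\tau\sigma^{-1})-2(m+1)\bigr)$, coming from the $B$-traces and the Weingarten weight alone.

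Both $\#\mathrm{cyc}(\tau)$ and $\#\mathrm{cyc}(\tau\sigma^{-1})$ are at most $m+1$, so this exponent is $\le 0$, with equality if and only if $\tau=e$ and $\tau\sigma^{-1}=e$, i.e. $\sigma=\tau=\mathrm{id}$. Thus only the identity pairing survives as $n\to\infty$: the $B$-side gives $\prod_r\tr(B_{i_r}^{(n)})$, while the identity row pairing closes the $A$-chain into the single entry $(A_{i_1}^{(n)}\cdots A_{i_m}^{(n)})_{\kappa\kappa}$, whose sum over $\kappa$ is $\eta_l(A_{i_1}^{(n)}\cdots A_{i_m}^{(n)})$. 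This reproduces, after $n\to\infty$, the monotone product $\eta_l(A_{i_1}^{(n)}\cdots A_{i_m}^{(n)})\prod_r\tr(B_{i_r}^{(n)})$, which is the right-hand side. I expect the main obstacle to be precisely the second step: verifying that the $A$-contribution is uniformly $O(1)$, i.e. that closing the $A$'s into loops never gains a factor of $n$. This is exactly where the small-rank (bounded non-normalized trace) nature of the $A_i^{(n)}$ enters, and it is what suppresses the trace-closing permutations and selects the monotone -- rather than cyclically monotone -- pattern, in accordance with the order of limits $\lim_l\lim_n$ of Theorem \ref{thm-limits}.

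Finally, to upgrade from expectation to the stated almost sure equality I would estimate the variance by the same Weingarten machinery applied to the doubled word $\eta_l(\cdots)\,\overline{\eta_l(\cdots)}$: the leading identity contributions to $\mathbb{E}|\eta_l(\cdots)|^2$ and to $|\mathbb{E}\,\eta_l(\cdots)|^2$ cancel, leaving a remainder of order $O(n^{-2})$, so that for each fixed $l$ the random variable $\eta_l(\cdots)$ converges almost surely to $\lim_n\eta_l(A_{i_1}^{(n)}\cdots A_{i_m}^{(n)})\prod_r\tr(B_{i_r}^{(n)})$ by Chebyshev and Borel--Cantelli. Letting $l\to\infty$ on both sides then gives the theorem; the interchange of limits is harmless because, as in Theorem \ref{thm-limits}, the inner $n$-limit is already attained for all $n$ large enough relative to $l$.
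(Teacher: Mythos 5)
Your proposal is correct in outline, but it takes a much longer road than the paper does. The paper's proof is a two-step reduction: it quotes the asymptotic factorization
$\Tr(A_{i_1}UB_{i_1}U^*\cdots A_{i_m}UB_{i_m}U^*)=\Tr(A_{i_1}\cdots A_{i_m})\tr(B_{i_1})\cdots\tr(B_{i_m})+O(n^{-1})$
already established (by exactly the Weingarten analysis you carry out) in \cite{CHS2018}, and then observes that the cut-off projection $I_l$ is itself a legitimate ``$A$-type'' matrix --- deterministic, with $\Tr(I_l^k)=l$ and more generally $\Tr$ of any word in $I_l$ and the $A_i$'s bounded uniformly in $n$ --- so the same factorization applies verbatim to the enlarged family $\{I_l,A_1,\dots,A_p\}$; since $\Tr(I_l\,\cdot\,)=\eta_l(\cdot)$, the theorem follows by letting $n\to\infty$ and then $l\to\infty$. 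Your direct Weingarten computation (exponent $\#\mathrm{cyc}(\tau)+\#\mathrm{cyc}(\tau\sigma^{-1})-2(m+1)\le 0$ with equality only at $\sigma=\tau=e$, the $A$-side contributing $O(1)$ for every pairing) is precisely the engine behind the cited estimate, so the two arguments are the same mechanism packaged differently: yours is self-contained, the paper's is shorter and makes the conceptual point that ``anchoring at $\eta_l$'' is just adjoining one more small-rank observable.

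One step of yours deserves more care, and you correctly flag it as the main obstacle but then assert it rather than prove it: the claim that the $\kappa$-anchored open segment, i.e.\ $\eta_l$ of an arbitrary $A$-word (and, for general pairings, quantities of the form $\Tr(I_l W_1 I_l W_2\cdots)$ with $W_j$ words in the $A_i$'s), is $O(1)$ in $n$. This does \emph{not} follow from the hypothesis $|\Tr(A_{i_1}^{(n)}\cdots A_{i_m}^{(n)})|\le C$ alone, since bounding the full trace of a word does not bound a partial diagonal sum of it; one needs an operator-norm bound on the $A_i^{(n)}$, which in the context of Section 3 comes from self-adjointness (then $\Tr((A_i^{(n)})^2)\le C$ forces $\|A_i^{(n)}\|\le\sqrt C$, whence $|\eta_l(W)|\le l\prod\|A\|$). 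The paper elides the same point when it asserts that the displayed formula ``implies'' its $I_l$-inserted version, so this is a shared gap rather than an error specific to your argument, but a complete write-up of your route must supply it. Your concluding variance/Borel--Cantelli step to upgrade from expectation to almost-sure convergence is the standard one and matches what \cite{CHS2018} does; note only that the statement's right-hand side should also carry the factor $\tr(B_{i_0}^{(n)})$ (there are $m+1$ conjugated blocks), an index slip present in the paper as well and handled correctly in your expansion.
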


\begin{proof}
We have that
$$
    |\Tr(A_{i_1}^{(n)} UB_{i_1}^{(n)}U^*\ldots A_{i_m}^{(n)} UB_{i_m}^{(n)}U^*)
    -\Tr (A_{i_1}^{(n)}\ldots A_{i_1}^{(n)})\tr (B_{i_1}^{(n)})\ldots \tr (B_{i_m}^{(n)})|
    =O(n^{-1})
$$
For $l\le n$, calling $I_l$ the matrix whose first $l$ diagonal entries are $1$
and all other entries in $M_n(\C)$ are zero,
the above formula implies
$$|\Tr( I_l UB_{i_0}^{(n)}U^*
A_{i_1}^{(n)} UB_{i_1}^{(n)}U^*\ldots A_{i_m}^{(n)} UB_{i_m}^{(n)}U^*)
-\Tr (I_l A_{i_1}^{(n)}\ldots A_{i_1}^{(n)})\tr (B_{i_0}^{(n)})\ldots \tr (B_{i_m}^{(n)})|=O(n^{-1}).$$
Noting that
$$
\Tr (I_l A_{i_1}^{(n)}\ldots A_{i_m}^{(n)})=
\eta_l (A_{i_1}^{(n)}\ldots A_{i_m}^{(n)}),$$
we obtain our desired model for monotone convergence
by letting $n\to\infty$ followed by $l\to\infty$.
In the space of compact operators of $l^2$, this is compared
to the result of Collins Hasebe Sakuma for the very same model,
where we first take $l\to\infty$ (to get the non-normalized
trace), followed by $n\to\infty$ (to get monotone convergence). 
\end{proof}

Let us discuss the relation between this result and previous results on this model.
In the paper \cite{CHS2018}, we considered the same model, with in addition an assumption of moment convergence for the sequences 
$A_1^{(n)},\ldots , A_p^{(n)}, B_1^{(n)},\ldots $ and $B_q^{(n)}\in M_n(\C)$ respectively.
Namely, in addition to assuming 
$|\Tr (A_{i_1}^{(n)}\ldots A_{i_m}^{(n)})|\le C$
and
$|\tr (B_{j_1}^{(n)}\ldots B_{j_m}^{(n)})|\le C,$
we assumed that 
$$\lim_n\Tr (A_{i_1}^{(n)}\ldots A_{i_m}^{(n)})=f(i_1,\ldots, i_m)
{\rm  \,\, and \,\, }
\lim_n\tr (B_{j_1}^{(n)}\ldots B_{j_m}^{(n)})=g(j_1,\ldots ,j_m).$$
There, our main result was to prove that
$$\lim_n\Tr(A_{i_1}^{(n)} UB_{i_1}^{(n)}U^*\ldots A_{i_m}^{(n)} UB_{i_m}^{(n)}U^*)$$
converges almost surely to
$$\lim_n \Tr(A_{i_1}^{(n)} \ldots A_{i_m}^{(n)})\tr (B_{i_1}^{(n)})\ldots \tr (B_{i_m}^{(n)}),$$
which defines the cyclic monotone independence. 
Given that $\Tr=\lim_l \eta_l$, the above theorem 
implies that the monotone state can be obtained
from the same model provided that we swap the limits
$n$ and $l$.
In this respect, we completely generalize and conceptualize the results of \cite{CHS2018}.

We conclude by noting, we were informed by Takahiro Hasebe that in a work in preparation with Octavio Arizmendi and Franz Lehner, among others, they 
obtain models for cyclically monotone independence and monotone independence, which are different from ours. 

\emph{Acknowledgments:} We are grateful to Roland Speicher and Takahiro Hasebe for preliminary comments on our preprint and useful discussions, and to Akihiro Miyagawa for a careful reading. 
BC was partially supported by JSPS Kakenhi 17H04823, 20K20882, 21H00987, JPJSBP120203202.
FL was supported by the SFB-TRR 195 'Symbolic Tools in Mathematics and their Application' of the German Research Foundation (DFG).
NS was partially supported by JSPS Kakenhi 19H01791, 19K03515, JPJSBP120209921, JPJSBP120203202.



\begin{thebibliography}{10}
\bibitem{AGO2004}
L. Accardi, A. Ben Ghorbal and N. Obata,
Monotone independence, comb graphs, and Bose-Einstein condensation. 
{\it Infin. Dimens. Anal. Quantum Probab. Relat. Top.}, {\bf 7}, (2004), no. 3, 419--435. 

\bibitem{CHS2018}
B. Collins, T. Hasebe and N.Sakuma,
Free probability for purely discrete eigenvalues of random matrices.
{\it J. Math. Soc. Japan}, {\bf 70}, (2018), no. 3, 1111--1150.

\bibitem{FHS2020}
U. Franz, T. Hasebe and S. Schleissinger,
Monotone increment processes, classical Markov processes, and Loewner chains. 
{\it Dissertationes Math}., {\bf 552}, (2020), 119 pp.

\bibitem{Hasebe2010a}
T. Hasebe,
Monotone convolution and monotone infinite divisibility from complex analytic viewpoint. {\it Infin. Dimens. Anal. Quantum Probab. Relat. Top.}, {\bf 13}, (2010), no. 1, 111--131. 

\bibitem{Hasebe2010b}
T. Hasebe,
Monotone convolution semigroups. 
{\it Studia Math.}, {\bf 200}, (2010), no. 2, 175--199.

\bibitem{Jekel2020}
D. Jekel,
Operator-valued chordal Loewner chains and non-commutative probability. 
{\it J. Funct. Anal.}, {\bf 278}, (2020), no. 10, 108452, 100 pp. 
\bibitem{JekelLiu2020}
D. Jekel and W. Liu,
An operad of non-commutative independences defined by trees. {\it Dissertationes Math.}, 553 (2020), 100 pp.

\bibitem{HS2011}
T. Hasebe and H. Saigo,
The monotone cumulants. 
{\it Ann. Inst. Henri Poincar{\'e} Probab. Stat.}, {\bf 47}, (2011), no. 4, 1160--1170.

\bibitem{Lenczewski2010}
R. Lenczewski,
Matricially free random variables.
{\it J. Funct. Anal.}, {\bf 258}, (2010), no. 12, 4075--4121.

\bibitem{Lu1997}
Y. G. Lu,
An interacting free Fock space and the arcsine law.
{\it Probab. Math. Statist.}, {\bf 17}, (1997), no. 1, Acta Univ. Wratislav. No. 1928, 149--166.
\bibitem{Muraki1996}
N. Muraki,
A new example of noncommutative "de Moivre-Laplace theorem''. 
{\it Probability theory and mathematical statistics (Tokyo, 1995)}, 353--362, World Sci. Publ., River Edge, NJ, 1996.
\bibitem{Muraki2001}
N. Muraki,
Monotonic independence, monotonic central limit theorem and monotonic law of small numbers. 
{\it Infin. Dimens. Anal. Quantum Probab. Relat. Top}. {\bf 4}, (2001), no. 1, 39--58.
\bibitem{Muraki2002}
N. Muraki,
The five independences as quasi-universal products. 
{\it Infin. Dimens. Anal. Quantum Probab. Relat. Top.}, {\bf 5}, (2002), no. 1, 113--134. 
\bibitem{Muraki2003}
N. Muraki,
The five independences as natural products. 
{\it Infin. Dimens. Anal. Quantum Probab. Relat. Top.}, {\bf 6}, (2003), no. 3, 337--371.

\bibitem{Schleissinger2017}
S. Schleissinger,
The chordal Loewner equation and monotone probability theory. 
{\it Infin. Dimens. Anal. Quantum Probab. Relat. Top.}, {\bf 20}, (2017), no. 3, 1750016, 17 pp.

\bibitem{Speicher1997}
R. Speicher, 
\newblock On universal products.
\newblock{\em Free probability theory (Waterloo, ON, 1995), 257--266, Fields Inst. Commun.}, {\bf 12}, Amer. Math. Soc., Providence, RI, 1997.
\end{thebibliography}
\end{document}